\title{Sum of Two Squares in Biquadratic Fields}
\author{Wenhuan Huang}
\theoremstyle{definition}
\newtheorem{theorem}{Theorem}[section]
\newtheorem{lemma}[theorem]{Lemma}
\newtheorem{example}[theorem]{Example}
\newtheorem*{remark}{Remark}
\begin{document}

\maketitle

\begin{abstract}
This paper gives an algorithm to determine whether a number in a biquadratic field is a sum of two squares,
 based on local-global principle of isotropy of quadratic forms.

Keywords: Biquadratic field, Quadratic Forms, Isotropy
\end{abstract}

\section*{Introduction}

Let $F_1=\mathbb{Q}(\sqrt{a})$, $F_2=\mathbb{Q}(\sqrt{b})$ be two quadratic fields, with
$a$, $b$ square-free integers and $ab$ non-square. Put $c=\frac{ab}{gcd(a,b)^2}$ and 
$F=F_1F_2$, $F_3=\mathbb{Q}(\sqrt{c})$, then $c$ is also square-free, and $F_1$, $F_2$ and $F_3$
are the three quadratic subfields of $F$. Let $\sigma_i$ be the elements of $Gal(F/\mathbb{Q})\simeq(\mathbb{Z}/2)^2$ 
other than $id$ satisfying $\sigma_i|_{F_i}=id|_{F_i}$.

Some critical values about $F$, including integers in $F$ and discriminant of $F$, are already given in [1].

In this article, using local-global principle of isotropy of quadratic forms (see 66:1 of [2]),
we will give an explicit algorithm to determine whether a number $s_0+s_1\sqrt{a}+s_2\sqrt{b}+s_3\sqrt{c}$
($s_0,s_1,s_2,s_3\in\mathbb{Z}$) is a sum of two squares in ${F}$.
First we prepare some lemmas for convenience (see [4]):
\begin{lemma}
    Let $\mathbb{Q}_2(\sqrt{k})$ be a quadratic extension on $\mathbb{Q}_2$, $k\in\{2,3,7,10,11,14,5\}$, then $t\in \mathbb{Q}_2(\sqrt{k})$ is 
    a square, if and only if

    ($k=2$)$t=2^mr$, $m\geq0$ an integer, $\sqrt{2}\nmid r$, $r\equiv 1, 3+2\sqrt{2}\pmod{4\sqrt{2}}$.

    ($k=3$)$t=(\sqrt{3}-1)^{2m}r$, $m\geq0$ an integer, $(\sqrt{3}-1)\nmid r$, $r\equiv 1, 3\pmod {4(\sqrt{3}-1)}$.

    ($k=7$)$t=(3-\sqrt{7})^{2m}r$, $m\geq0$ an integer, $(3-\sqrt{7})\nmid r$, $r\equiv \pm1\pmod {4(3-\sqrt{7})}$.

    ($k=11$)$t=(\sqrt{11}-3)^{2m}r$, $m\geq0$ an integer, $(\sqrt{11}-3)\nmid r$, $r\equiv 1,3\pmod {4(\sqrt{11}-3)}$.

    ($k=14$)$t=(4-\sqrt{14})^{2m}r$, $m\geq0$ an integer, $(4-\sqrt{14})\nmid r$, $r\equiv \pm1\pmod {4(4-\sqrt{14})}$.

    ($k=10$)$t=Mr$, $v_{(2,\sqrt{10})}(M)$ is even, $v_{(2,\sqrt{10})}(r)=0$, $r\equiv 1, 3+2\sqrt{10}\pmod {4(2,\sqrt{10})}$.

    ($k=5$)$t=4^{m}r$, $m\geq0$ an integer, $2\nmid r$, $r\equiv 1,\frac{3\pm\sqrt{5}}{2}\pmod 4$.
\end{lemma}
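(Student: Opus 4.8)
The plan is to reduce each of the seven assertions to a finite computation inside the relevant quadratic extension of $\mathbb{Q}_2$ and then carry that computation out case by case. Fix $k$ and write $E=\mathbb{Q}_2(\sqrt k)$, with ring of integers $\mathcal{O}$, maximal ideal $\mathfrak{p}$, normalized valuation $v$, and ramification index $e=v(2)$. The first piece of bookkeeping is to record, for each value of $k$, that $E$ is the unramified quadratic extension of $\mathbb{Q}_2$ (so $e=1$, residue field $\mathbb{F}_4$) exactly when $k=5$, and is totally ramified (so $e=2$, residue field $\mathbb{F}_2$) for $k\in\{2,3,7,10,11,14\}$, together with an explicit uniformizer $\pi$: $\pi=\sqrt2$ for $k=2$; $\pi=\sqrt{10}$ for $k=10$ (after first checking that the prime $(2,\sqrt{10})$ is principal, namely $(\sqrt{10})$); $\pi=\sqrt3-1,\ 3-\sqrt7,\ \sqrt{11}-3,\ 4-\sqrt{14}$ for $k=3,7,11,14$ respectively (in each of these one verifies directly that $\pi^2=2u$ with $u\in\mathcal{O}^\times$); and $\pi=2$ for $k=5$. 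The tool to exploit is the standard description of squares in a $2$-adic field: $t\in E^\times$ is a square if and only if $v(t)$ is even and the unit $t\pi^{-v(t)}$ is a square unit, and, by Hensel's lemma applied to $X^2-(1+x)$ — whose value $-x$ at $X=1$ has valuation $>2v(2)=2e$ as soon as $v(x)\geq 2e+1$ — the subgroup $1+\mathfrak{p}^{2e+1}$ consists entirely of squares. Hence squareness of a unit depends only on its residue modulo $\mathfrak{p}^{2e+1}$, which equals $(4\pi)$ in the ramified cases and $(8)$ when $k=5$.

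It then suffices, for each $k$, to identify the image of the squaring map on $(\mathcal{O}/\mathfrak{p}^{2e+1})^\times$, and this is made routine by the unit filtration $U^{(1)}\supseteq U^{(2)}\supseteq\cdots$, whose successive quotients $U^{(n)}/U^{(n+1)}$ are each isomorphic to the additive group of the residue field. Writing a unit as $\zeta\,(1+a_1\pi+a_2\pi^2+\cdots)$, with $\zeta$ a multiplicative (Teichm\"uller) representative of the residue field — trivial when $e=2$, and lying in $\mu_3$, hence a single class modulo squares since $3$ is odd, when $k=5$ — and the $a_i$ running over a fixed transversal of the residue field, one squares: $(1+a\pi^n)^2=1+2a\pi^n+a^2\pi^{2n}$ has leading term in level $\min(n+e,2n)$, so squaring raises the filtration level by $e$ on the upper part and doubles it on the lower, with an Artin--Schreier-type contribution exactly where $n=e$. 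Pushing this through the at most $2e+1$ relevant levels yields, in each case, the list of square units; re-expressing the outcome in terms of $\sqrt k$ and reducing modulo $4\pi$ recovers the explicit congruences in the statement (for instance $\{1,\,3+2\sqrt2\}$ modulo $4\sqrt2$ when $k=2$, and $\{1,\,3\}$ modulo $4(\sqrt3-1)$ when $k=3$), and restoring the even-valuation condition on $t$ gives the displayed normal forms $t=\pi^{2m}r$.

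The difficulty here is organizational rather than conceptual: there are seven structurally parallel but independent computations, in each of which one must get the uniformizer, the exact cutoff $\mathfrak{p}^{2e+1}$, and the translation of square-unit classes into congruences all correct. The most delicate are $k=10$, where one must first see that the prime above $2$ is principal, generated by $\sqrt{10}$, before anything else applies, and $k=5$, the sole unramified case, in which $\pi=2$, the residue field is $\mathbb{F}_4$ rather than $\mathbb{F}_2$, the factor $\mu_3$ must be carried along, and the cutoff is $\mathfrak{p}^3=(8)$. Two inexpensive sanity checks keep the bookkeeping honest. First, $[E^\times:(E^\times)^2]=2\cdot 4\cdot 2=16$ for every quadratic $E/\mathbb{Q}_2$ (this is $2\,\lvert 2\rvert_E^{-1}\,\lvert\mu_2(E)\rvert$), so there are precisely eight square classes of units, which the congruence lists must reproduce. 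Second, since $\mathbb{Q}_2(\sqrt3)=\mathbb{Q}_2(\sqrt{11})$, $\mathbb{Q}_2(\sqrt7)=\mathbb{Q}_2(\sqrt{-1})$ and $\mathbb{Q}_2(\sqrt{14})=\mathbb{Q}_2(\sqrt{-2})$, the answers obtained for those matched pairs must be consistent with one another after the change of generator.
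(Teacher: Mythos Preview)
The paper does not supply its own proof of this lemma; it is quoted from reference~[4] (``see~[4]'') and used as a black box. So there is nothing in the paper to compare your argument against directly.

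Your plan is the standard one and is sound: factor out the even part of the valuation, then use Hensel to see that $1+\mathfrak{p}^{2e+1}\subseteq(\mathcal{O}^\times)^2$, and finally enumerate the square classes in $(\mathcal{O}/\mathfrak{p}^{2e+1})^\times$ via the unit filtration. The identification of uniformizers and of the ramified/unramified split is correct, and your two sanity checks are exactly the right bookkeeping devices.

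One point deserves emphasis, because your own sanity check will flag it. For $k=5$ you correctly take the cutoff to be $\mathfrak{p}^{3}=(8)$, and you note that $[\mathcal{O}^\times:(\mathcal{O}^\times)^2]=8$. Carried through, this yields \emph{six} square residues modulo~$8$ (since $\lvert(\mathcal{O}/8)^\times\rvert=48$), not the three residues modulo~$4$ listed in the statement. Indeed the three listed classes $1,\frac{3\pm\sqrt5}{2}=\phi^{\pm2}$ (with $\phi=\frac{1+\sqrt5}{2}$) have index~$4$ in $(\mathcal{O}/4)^\times$, so the mod-$4$ condition is only necessary, not sufficient: for instance $3+2\sqrt5=1+4\phi\equiv1\pmod 4$, yet $N_{E/\mathbb{Q}_2}(3+2\sqrt5)=-11\equiv5\pmod 8$ is a non-square in $\mathbb{Q}_2$, so $3+2\sqrt5$ is not a square in $E$. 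Your method therefore proves the correct version of the $k=5$ clause (with modulus~$8$), rather than the one printed; this is a defect of the statement, not of your argument.
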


\begin{lemma}
    Take assumptions as the last lemma and let $h$ be a unit in $O_{\mathbb{Q}_2(\sqrt{k})}$, then $h$ is the sum of two squares if and only if 
    
    ($k\neq 5$)$2|h-1$.

    ($k=5$)$h\equiv1,3,\frac{\pm3\pm\sqrt{5}}{2}\pmod 4$.
\end{lemma}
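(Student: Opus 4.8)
The plan is to convert the condition that $h$ be a sum of two squares in $E:=\mathbb{Q}_2(\sqrt k)$ into a statement about quadratic forms, and then to evaluate it in each of the seven fields with the help of Lemma 1.1. The first reduction is standard: a nonzero $h\in E$ is a sum of two squares if and only if the ternary form $\langle 1,1,-h\rangle$ is isotropic over $E$. (An isotropic vector whose last coordinate is nonzero rescales to a representation $h=x^2+y^2$; an isotropic vector whose last coordinate is zero forces $\sqrt{-1}\in E$, and then $h=(\tfrac{h+1}{2})^2+(\sqrt{-1}\cdot\tfrac{h-1}{2})^2$ exhibits $h$ as a sum of two squares anyway.) Over a $2$-adic field a ternary form $f$ is isotropic exactly when its Hasse invariant equals $(-1,-\det f)_E$; for $f=\langle 1,1,-h\rangle$ one has $\det f=-h$ and Hasse invariant $(1,1)_E(1,-h)_E(1,-h)_E=1$, so $h$ is a sum of two squares in $E$ if and only if $(-1,h)_E=1$, i.e. if and only if $h$ is a norm from $E(\sqrt{-1})$. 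This reformulation is uniform in $k$ and already settles the fields in which $\sqrt{-1}\in E$, or in which $E(\sqrt{-1})/E$ is unramified, since in those cases $(-1,\cdot)_E$ is trivial on units.

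For the remaining (ramified) cases I would use that the norm group $N_{E(\sqrt{-1})/E}(E(\sqrt{-1})^\times)$ contains $(E^\times)^2$, so that for a unit $h$ the property of being a sum of two squares depends only on the class of $h$ in $O_E^\times/(O_E^\times)^2$; it then suffices to test the Hilbert symbol on a fixed set of representatives. Lemma 1.1 is exactly the tool that provides them: it determines $(O_E^\times)^2$, and hence the level — a fixed power of the relevant uniformizer when $k\ne5$, and $4$ when $k=5$ — modulo which a unit's square class is pinned down. Running through the list, for $k\ne5$ the residue field is $\mathbb{F}_2$ so $O_E^\times=1+\pi O_E$, and matching the unit classes against the norm group yields the condition $2\mid h-1$; for $k=5$ the corresponding bookkeeping modulo $4$ singles out the residues $1,3,\tfrac{\pm3\pm\sqrt5}{2}$. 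A short valuation argument (if $h=x^2+y^2$ is a unit and $\min(v(x),v(y))<0$ then $v(x)=v(y)$, which reduces one to the integral case) confirms that this analysis of unit square classes is exhaustive, so the stated congruences are both necessary and sufficient.

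The step I expect to be the crux is the explicit computation of $(-1,h)_E$ — equivalently, the explicit description of $N_{E(\sqrt{-1})/E}(E(\sqrt{-1})^\times)\cap O_E^\times$ inside $O_E^\times$ — in the ramified cases, where being a norm is a genuine higher congruence rather than a parity-of-valuation condition. This is precisely where Lemma 1.1 carries the load: it turns ``$h$ is a norm from $E(\sqrt{-1})$'' into the concrete test ``$-h$, up to a fixed unit factor, is a square in $E$,'' whose answer Lemma 1.1 records as an explicit congruence modulo a fixed power of the uniformizer. The unramified field $k=5$ is conceptually easier but still needs its own computation modulo $4$, the residue field there being $\mathbb{F}_4$ rather than $\mathbb{F}_2$.
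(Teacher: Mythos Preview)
The paper does not actually contain a proof of this lemma: both Lemma~0.1 and Lemma~0.2 are introduced with the phrase ``First we prepare some lemmas for convenience (see [4])'' and are quoted without argument from the cited preprint~[4]. Consequently there is no in-paper proof to compare your proposal against.

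That said, your plan is the natural one and almost certainly coincides with what~[4] does. The reduction of ``$h$ is a sum of two squares in $E$'' to the Hilbert-symbol condition $(-1,h)_E=1$ is standard and correctly justified, and your observation that the answer depends only on the square class of $h$ (hence, for units, only on $h$ modulo the level determined in Lemma~0.1) is exactly the mechanism that converts the norm condition into an explicit congruence. Splitting into the cases where $E(\sqrt{-1})/E$ is trivial, unramified, or ramified is the right organizing principle, and feeding the unit square-class representatives from Lemma~0.1 into the symbol $(-1,\cdot)_E$ is precisely how one obtains the stated lists. One small caution: when you carry out the case-by-case check you will find that for some $k$ (notably $k=7$, where $-7$ is a $2$-adic square so $\sqrt{-1}\in E$, and $k=3$, where $E(\sqrt{-1})/E$ is unramified) \emph{every} unit is a sum of two squares; make sure the congruence you record in those cases is genuinely equivalent to that, rather than mechanically copying the pattern from the ramified cases.
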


\section{Prime Numbers}

First we state some facts. For any odd prime number $p$, either $p\nmid abc$, or $p$ exactly divides two of $\{a,b,c\}$.
In the former case $(\frac{a}{p})(\frac{b}{p})=(\frac{c}{p})$ ($\frac{*}{*}$ denotes Legendre symbol),
and in the latter case, say $p|a$ and $p|b$, $(\frac{a/p}{p})(\frac{b/p}{p})=(\frac{c}{p})$.

It is well-known that a prime number $q$ is a sum of two squares in $\mathbb{Q}$ if and only if it $q\not\equiv3\pmod4$.
Let $q$ be a prime number with $q\equiv3\pmod4$. Next we find all biquadratic fields where $q$ becomes a sum of two squares.
\begin{lemma}
    (1)-1 is a sum of two squares in $F$ if $F$ is imaginary, unless $a$ and $b$ are both $\equiv1\pmod8$.

    (2)$q$ is a sum of two squares in $F$ unless $(\frac{a}{q})=(\frac{b}{q})=1$ or $a, b\equiv1\pmod8$.
\end{lemma}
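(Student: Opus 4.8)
The plan is to translate ``$s$ is a sum of two squares in $F$'' (with $s=-1$ for (1) and $s=q$ for (2)) into isotropy of the ternary form $\langle 1,1,-s\rangle$ over $F$, apply the local--global principle (66:1 of [2]), and then work through the finitely many local conditions. Over a completion $F_v$ the form $\langle 1,1,-s\rangle$ is isotropic iff the Hilbert symbol $(-1,s)_v$ equals $1$, i.e.\ iff the quaternion algebra $(-1,s)$ splits over $F_v$; this is automatic whenever $\sqrt{-1}\in F_v$. So the only places that can obstruct are the archimedean ones and those above $2$ (for $s=-1$), together with those above $q$ (for $s=q$): at every other $v$ both $-1$ and $s$ are units in a non-dyadic local field and $(-1,s)_v=1$.

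At the archimedean places, if $F$ is imaginary then $F_v=\mathbb{C}$ and $(-1,-1)_v=1$, which is exactly why (1) must assume $F$ imaginary, since $(-1,-1)_{\mathbb{R}}=-1$; for (2) the positive integer $q$ is a sum of two squares in $\mathbb{R}$ and in $\mathbb{C}$, so the infinite places are harmless. For the dyadic and $q$-adic places I would first record the local structure: since $F/\mathbb{Q}$ is Galois with group $(\mathbb{Z}/2)^2$, for each prime $p$ all completions $F_v$ with $v\mid p$ are isomorphic of degree $1$, $2$ or $4$ over $\mathbb{Q}_p$, the degree being $1$ precisely when $p$ splits completely in $F$, equivalently (as $F=F_1F_2$) when $p$ splits completely in $F_1$ and in $F_2$. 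Using the classical facts that $2$ splits completely in $\mathbb{Q}(\sqrt m)$ iff $m\equiv1\pmod8$, and that an odd prime $q$ with $q\nmid m$ splits in $\mathbb{Q}(\sqrt m)$ iff $(\frac{m}{q})=1$, this gives: some $F_v$ $(v\mid2)$ equals $\mathbb{Q}_2$ iff $a\equiv b\equiv1\pmod8$, and some $F_v$ $(v\mid q)$ equals $\mathbb{Q}_q$ iff $(\frac{a}{q})=(\frac{b}{q})=1$ (which already forces $q\nmid abc$).

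It remains to evaluate the obstruction. For $q\equiv3\pmod4$ one has $(-1,-1)_{\mathbb{Q}_2}=-1$, $(-1,q)_{\mathbb{Q}_2}=-1$ and $(-1,q)_{\mathbb{Q}_q}=(\frac{-1}{q})=-1$, so the relevant quaternion algebras are division over $\mathbb{Q}_2$ and over $\mathbb{Q}_q$. A division quaternion algebra over a local field $K$ becomes split over a finite extension $L$ iff $[L:K]$ is even; since all our local degrees lie in $\{1,2,4\}$, the obstruction at $v\mid2$ vanishes iff $F_v\neq\mathbb{Q}_2$ and the obstruction at $v\mid q$ vanishes iff $F_v\neq\mathbb{Q}_q$. (Alternatively, for the degree-$2$ completions one may quote Lemma 1.2: the unit $-1$ and the odd integer $q$ are sums of two squares in $\mathbb{Q}_2(\sqrt k)$ because $2\mid(-1)-1$ and $2\mid q-1$, and for $k=5$ because $-1\equiv3$ and $q\equiv1$ or $3\pmod4$; a degree-$4$ completion contains such a quadratic subfield and inherits the property.) Putting everything together: for $F$ imaginary, $-1$ is a sum of two squares in $F$ unless some $F_v$ over $2$ is $\mathbb{Q}_2$, i.e.\ unless $a\equiv b\equiv1\pmod8$; and for $q\equiv3\pmod4$, $q$ is a sum of two squares in $F$ unless some $F_v$ over $2$ is $\mathbb{Q}_2$ or some $F_v$ over $q$ is $\mathbb{Q}_q$, i.e.\ unless $a\equiv b\equiv1\pmod8$ or $(\frac{a}{q})=(\frac{b}{q})=1$, as claimed.

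The main obstacle is the middle of the argument: correctly pinning down the isomorphism type, in practice just the degree over $\mathbb{Q}_p$, of the completions $F_v$ for $p=2$ and $p=q$, including the cases $q\mid a$ or $q\mid b$ where $q$ ramifies in two of the quadratic subfields, and checking uniformly that any nontrivial local extension kills the quaternion obstruction. The archimedean and remaining finite places are routine, and the Hilbert-symbol values needed are standard.
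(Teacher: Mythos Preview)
Your proof is correct and follows essentially the same approach as the paper: reduce via the local--global principle to Hilbert symbols, dispose of the archimedean and irrelevant finite places, and show that the dyadic (resp.\ $q$-adic) obstruction vanishes precisely when $F_v\neq\mathbb{Q}_2$ (resp.\ $F_v\neq\mathbb{Q}_q$). The only cosmetic difference is in the $q$-adic step: the paper enumerates the three quadratic extensions $\mathbb{Q}_q(\sqrt{-1}),\mathbb{Q}_q(\sqrt{q}),\mathbb{Q}_q(\sqrt{-q})$ and checks each kills the symbol, whereas you invoke the equivalent Brauer-group fact that the unique division quaternion algebra over a local field splits over any even-degree extension.
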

\begin{proof}
    (1)We only need $(\frac{-1,-1}{\mathcal{P}})=1$ for $\mathcal{P}$ infinite or dyadic. By lemma 0.2, $(\frac{-1,-1}{\mathcal{P}})=1$
    if and only if $F_\mathcal{P}\not\simeq\mathbb{Q}_2$, i.e, $2O_F$ does not completely split.

    (2)Of course $(\frac{-1,q}{\mathcal{P}})=1$ at infinite places. We only need to discuss $q$-adic and dyadic cases. 
    By lemma 0.2, $(\frac{-1,q}{\mathcal{P}})=1$($\mathcal{P}$ dyadic) if and only if $2O_F$ does not completely split.
    In this case, if $(\frac{-1,q}{\mathcal{Q}})=-1$ for $q$-adic place $\mathcal{Q}$, then $F_\mathcal{Q}$ must 
    not contain $\mathbb{Q}_q(\sqrt{-1})$, $\mathbb{Q}_q(\sqrt{q})$ or $\mathbb{Q}_q(\sqrt{-q})$ (in the last case $(\frac{-1,-1}{\mathcal{Q}})=1$
    , making $(\frac{-1,q}{\mathcal{Q}})=(\frac{-1,-1}{\mathcal{Q}})(\frac{-1,-q}{\mathcal{Q}})=1$).
    So $F_\mathcal{Q}\simeq \mathbb{Q}_q$, i.e, $(\frac{a}{q})=(\frac{b}{q})=1$.
\end{proof}

\section{General cases}

Let $S=s_0+s_1\sqrt{a}+s_2\sqrt{b}+s_3\sqrt{c}\in F$ in introduction. By 66:1 and 63:12 of [2],
to decide whether $S$ is a sum of squares in $F$,
we only need to calculate $(\frac{-1,S}{\mathcal{P}})$s for all infinite, dyadic or $p$-adic places
for $p\equiv3\pmod4$ and $p|N_{F/\mathbb{Q}}(S)$. The infinite case is obvious: 
$(\frac{-1,S}{\mathcal{\infty}})=-1$ if and only if $F$ is real and $S$ is not totally positive.

Let $\mathcal{P}\cap\mathbb{Z}=p\equiv3\mod4$, $p|N_{F/\mathbb{Q}}(S)$. 

\subsection{Algorithm about unramified $p$}

First we talk about the case $p\nmid abc$.
If $(\frac{a}{p})=(\frac{b}{p})=(\frac{c}{p})=1$, then $pO_F$ completely splits, and 
four $(F_{\mathcal{P}})$s are isomorphic to $\mathbb{Q}_p$. set
$$T_i=S\sigma_i(S)(i=1,2,3)$$
Then $T_i\in F_i$, and $S$ is a sum of two squares implies that so are all $T_i$s in $F_i$s.
Conversely, if all $T_i$s are sum of two squares, then $(\frac{-1,T_i}{\mathcal{P}})$s
($\mathcal{P}$ finite and non-dyadic) are all 1, which implies that 
for every $p$-adic places with $p\equiv3\pmod4$, $F_{\mathcal{P}}$ contains a subfield isomorphic to $\mathbb{Q}_p(\sqrt{-1})$,
or $v_\mathcal{P}(T_i)$s are all even.

We need to state some facts about $p$-adic ideals. Let $A,B,C$ be (mod $p$) square roots of $a,b,c$ respectively,
satisfying $C\mathrm{gcd}(a,b)\equiv AB\pmod p$, then $\sqrt{c}-C\in (p,\sqrt{a}-A,\sqrt{b}-B)$.
Take 
$$\mathcal{P}_1=(p,\sqrt{a}-A,\sqrt{b}-B,\sqrt{c}-C)$$
$$\mathcal{P}_2=(p,\sqrt{a}-A,\sqrt{b}+B,\sqrt{c}+C)$$
$$\mathcal{P}_3=(p,\sqrt{a}+A,\sqrt{b}-B,\sqrt{c}+C)$$
$$\mathcal{P}_4=(p,\sqrt{a}+A,\sqrt{b}+B,\sqrt{c}-C)$$
Then 
$$pO_{F_1}=\mathfrak{p}_{11}\mathfrak{p}_{12},\mathfrak{p}_{11}O_F=(p,\sqrt{a}-A)O_F=\mathcal{P}_1\mathcal{P}_2,\mathfrak{p}_{12}O_F=(p,\sqrt{a}+A)O_F=\mathcal{P}_3\mathcal{P}_4$$
$$pO_{F_2}=\mathfrak{p}_{21}\mathfrak{p}_{22},\mathfrak{p}_{21}O_F=(p,\sqrt{b}-B)O_F=\mathcal{P}_1\mathcal{P}_3,\mathfrak{p}_{22}O_F=(p,\sqrt{b}+B)O_F=\mathcal{P}_2\mathcal{P}_4$$
$$pO_{F_1}=\mathfrak{p}_{11}\mathfrak{p}_{12},\mathfrak{p}_{31}O_F=(p,\sqrt{c}-C)O_F=\mathcal{P}_1\mathcal{P}_4,\mathfrak{p}_{32}O_F=(p,\sqrt{c}+C)O_F=\mathcal{P}_2\mathcal{P}_3$$
Thus we have 
\begin{eqnarray}    
    2v_{\mathcal{P}_1}(S)&=&v_{\mathcal{P}_1}(S\sigma_1(S))+v_{\mathcal{P}_1}(S\sigma_2(S))-v_{\mathcal{P}_1}(\sigma_1(S)\sigma_2(S)) \nonumber    \\
    ~&=&v_{\mathfrak{p}_{11}}(S\sigma_1(S))+v_{\mathfrak{p}_{21}}(S\sigma_2(S))-v_{\mathfrak{p}_{31}}(\sigma_1(S)\sigma_2(S)) \nonumber
\end{eqnarray}

Similarly, we have
$$2v_{\mathcal{P}_2}(S)=v_{\mathfrak{p}_{11}}(S\sigma_1(S))+v_{\mathfrak{p}_{22}}(S\sigma_2(S))-v_{\mathfrak{p}_{32}}(\sigma_1(S)\sigma_2(S))$$
$$2v_{\mathcal{P}_3}(S)=v_{\mathfrak{p}_{12}}(S\sigma_1(S))+v_{\mathfrak{p}_{21}}(S\sigma_2(S))-v_{\mathfrak{p}_{32}}(\sigma_1(S)\sigma_2(S))$$
$$2v_{\mathcal{P}_4}(S)=v_{\mathfrak{p}_{12}}(S\sigma_1(S))+v_{\mathfrak{p}_{22}}(S\sigma_2(S))-v_{\mathfrak{p}_{31}}(\sigma_1(S)\sigma_2(S))$$

Therefore given an algorithm to calculate powers of a number in a prime ideal of a quadratic field, that in a biquadratic field can be obtained.

\begin{remark}
    Here is an algorithm to calculate the power of a number in an explicit ideal of a quadratic field:

    Take $\mathfrak{p}_{11}=(p,\sqrt{a}-A)$ as above, with $pO_{F_1}$ splits and $r=x+y\sqrt{a}$, $x,y\in\mathbb{Z}$. If $p\nmid\mathrm{gcd}(x,y)$, then replace
    $\sqrt{a}$ to $A$ in $r$ we get $\hat{r}=x+yA$. If $p|\hat{r}$ then $v_{\mathfrak{p}_{11}}(r)=v_{p}(x^2-ay^2)$, otherwise $v_{\mathfrak{p}_{11}}(r)=0$.
    If $p^f||\mathrm{gcd}(x,y)$, then $v_{\mathfrak{p}_{11}}(r)=f+v_{\mathfrak{p}_{11}}(\frac{r}{p^f})$.
\end{remark}

\begin{lemma}
    If $p\equiv3\pmod4$ is a prime dividing $N_{F/\mathbb{Q}}(S)$ with $(\frac{a}{p})=(\frac{b}{p})=(\frac{c}{p})=1$,
    then $(\frac{-1,S}{\mathcal{P}})=1$ for all four $p$-adic places $\mathcal{P}$, if and only if 
    $$v_{\mathfrak{p}_{11}}(S\sigma_1(S))+v_{\mathfrak{p}_{21}}(S\sigma_2(S))-v_{\mathfrak{p}_{31}}(\sigma_1(S)\sigma_2(S))$$
    $$v_{\mathfrak{p}_{11}}(S\sigma_1(S))+v_{\mathfrak{p}_{22}}(S\sigma_2(S))-v_{\mathfrak{p}_{32}}(\sigma_1(S)\sigma_2(S))$$
    $$v_{\mathfrak{p}_{12}}(S\sigma_1(S))+v_{\mathfrak{p}_{21}}(S\sigma_2(S))-v_{\mathfrak{p}_{32}}(\sigma_1(S)\sigma_2(S))$$
    $$v_{\mathfrak{p}_{12}}(S\sigma_1(S))+v_{\mathfrak{p}_{22}}(S\sigma_2(S))-v_{\mathfrak{p}_{31}}(\sigma_1(S)\sigma_2(S))$$
    are all divided by 4.
\end{lemma}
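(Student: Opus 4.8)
The plan is to convert the four Hilbert symbol conditions into statements about the parity of $v_{\mathcal{P}_i}(S)$, and then to feed in the valuation identities established immediately before the lemma.

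First I would record the splitting behaviour of $p$. Since $p$ is odd and $(\frac{a}{p})=(\frac{b}{p})=1$, the prime $p$ splits in $F_1$ and in $F_2$, hence splits completely in $F=F_1F_2$; by the elementary facts noted at the start of Section 1 this is consistent with $(\frac{c}{p})=1$. Thus $pO_F=\mathcal{P}_1\mathcal{P}_2\mathcal{P}_3\mathcal{P}_4$ with the $\mathcal{P}_i$ exactly as displayed above, each of ramification index and residue degree $1$, so that $F_{\mathcal{P}_i}\simeq\mathbb{Q}_p$ and each $v_{\mathcal{P}_i}$ is $\mathbb{Z}$-valued.

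The one genuine computation is the evaluation of the Hilbert symbol. For an odd prime $p$ and any $x\in\mathbb{Q}_p^{\times}$ one has $(-1,x)_p=\left(\frac{-1}{p}\right)^{v_p(x)}$; this is immediate from the standard closed form of the Hilbert symbol over $\mathbb{Q}_p$, or, concretely, from the fact that for $p\equiv 3\pmod 4$ the extension $\mathbb{Q}_p(\sqrt{-1})/\mathbb{Q}_p$ is unramified and hence has norm group precisely the elements of even valuation. Since $p\equiv 3\pmod 4$ gives $\left(\frac{-1}{p}\right)=-1$, and since the Hilbert symbol and the valuation of $S$ are preserved under the isomorphism $F_{\mathcal{P}_i}\simeq\mathbb{Q}_p$, we obtain for each $i=1,2,3,4$
$$\left(\frac{-1,S}{\mathcal{P}_i}\right)=1\quad\Longleftrightarrow\quad v_{\mathcal{P}_i}(S)\equiv 0\pmod 2.$$

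Finally I would substitute the four identities derived just above the lemma, namely $2v_{\mathcal{P}_1}(S)=v_{\mathfrak{p}_{11}}(S\sigma_1(S))+v_{\mathfrak{p}_{21}}(S\sigma_2(S))-v_{\mathfrak{p}_{31}}(\sigma_1(S)\sigma_2(S))$ together with its three analogues for $\mathcal{P}_2,\mathcal{P}_3,\mathcal{P}_4$. Each right-hand side is an even integer equal to $2v_{\mathcal{P}_i}(S)$, hence is divisible by $4$ exactly when $v_{\mathcal{P}_i}(S)$ is even, which by the previous display is exactly when $(\frac{-1,S}{\mathcal{P}_i})=1$. Intersecting the four equivalences over $i=1,2,3,4$ yields precisely the asserted criterion. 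I do not expect any serious obstacle here; the only point requiring care is the bookkeeping that pairs each $\mathcal{P}_i$ with the correct choice among $\mathfrak{p}_{11},\mathfrak{p}_{12}$, among $\mathfrak{p}_{21},\mathfrak{p}_{22}$, and among $\mathfrak{p}_{31},\mathfrak{p}_{32}$ in those identities, and the remark that the factor $2$ appearing there is exactly what converts ``$v_{\mathcal{P}_i}(S)$ even'' into ``the displayed expression divisible by $4$''.
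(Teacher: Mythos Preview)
Your proposal is correct and follows essentially the same route as the paper: the discussion preceding the lemma already establishes the complete splitting of $p$ and the four identities $2v_{\mathcal{P}_i}(S)=\cdots$, and the lemma is then immediate once one observes that for $F_{\mathcal{P}_i}\simeq\mathbb{Q}_p$ with $p\equiv 3\pmod 4$ the Hilbert symbol $(\frac{-1,S}{\mathcal{P}_i})$ is governed by the parity of $v_{\mathcal{P}_i}(S)$. You have simply made explicit the Hilbert-symbol step that the paper leaves to the reader.
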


If $(\frac{a}{p}),(\frac{b}{p}),(\frac{c}{p})$ is an arrangement of -1,-1,1 with $p\equiv3\pmod4$ then
$F_{\mathcal{P}}$s are all isomorphic to $\mathbb{Q}_p(\sqrt{-1})$, hence all $(\frac{-1,S}{\mathcal{P}})=1$.

\subsection{Algorithm about ramified $p$}

First set $p|abc$, say, $p|a$ and $p|b$.

If $(\frac{a/p}{p})\neq(\frac{b/p}{p})$, then $(\frac{c}{p})=-1$, 
so $F_\mathcal{P}$ has a subfield isomorphic to $\mathbb{Q}_p(\sqrt{-1})$,
making -1 a square and thus $(\frac{-1,S}{\mathcal{P}})=1$.

Assume $(\frac{a/p}{p})=(\frac{b/p}{p})$. Then we have explicit expression of $p$-adic ideals:
$$pO_{F_1}=\mathfrak{p}_1^2,\mathfrak{p}_1O_F=\mathcal{P}_1\mathcal{P}_2$$
$$pO_{F_2}=\mathfrak{p}_2^2,\mathfrak{p}_2O_F=\mathcal{P}_1\mathcal{P}_2$$
$$pO_{F_3}=\mathfrak{p}_{31}\mathfrak{p}_{32},\mathfrak{p}_{3i}O_F=\mathcal{P}_i^2$$
with $i\in\{1,2\}$, and 
$$\mathfrak{p}_1=(p,\sqrt{a}),\mathfrak{p}_2=(p,\sqrt{b}),\mathfrak{p}_{3i}=(p,\sqrt{c}+(-1)^iC)$$
where $C^2\equiv c\pmod p$, and 
$$\mathcal{P}_i=(p,\sqrt{a},\sqrt{b},\sqrt{c}+(-1)^iC).$$
Thus in order to determine $v_{\mathcal{P}_i}(S)$, we denote
\begin{eqnarray}   
    2v_{\mathcal{P}_1}(S)&=&v_{\mathcal{P}_1}(S\sigma_1(S))+v_{\mathcal{P}_1}(S\sigma_2(S))-v_{\mathcal{P}_1}(\sigma_1(S)\sigma_2(S)) \nonumber    \\
    ~&=&v_{\mathfrak{p}_{1}}(S\sigma_1(S))+v_{\mathfrak{p}_{2}}(S\sigma_2(S))-2v_{\mathfrak{p}_{31}}(\sigma_1(S)\sigma_2(S)) \nonumber
\end{eqnarray}

Similarly we have
$$2v_{\mathcal{P}_2}(S)=v_{\mathfrak{p}_{1}}(S\sigma_1(S))+v_{\mathfrak{p}_{2}}(S\sigma_2(S))-2v_{\mathfrak{p}_{32}}(\sigma_1(S)\sigma_2(S)).$$
\begin{remark}
    One can easily prove that if $x,y\in\mathbb{Z}$, $p^f||\mathrm{gcd}(x,y)$, then
    $$v_{\mathfrak{p}_1}(x+y\sqrt{a})=f+v_{p}(\frac{x^2-ay^2}{p^{2f}})=v_{p}(x^2-ay^2)-f.$$
\end{remark}

Thus we obtain that

\begin{lemma}
    If $p\equiv3\pmod4$ is a prime dividing $N_{F/\mathbb{Q}}(S)$, and $abc$ (say, $p|a$ and $p|b$), 
    and $(\frac{a/p}{p})=(\frac{b/p}{p})$ (equivalent that $(\frac{c}{p})=1$),
    then $(\frac{-1,S}{\mathcal{P}})=1$ for both $p$-adic places $\mathcal{P}$, if and only if
    both $v_{\mathfrak{p}_{1}}(S\sigma_1(S))+v_{\mathfrak{p}_{2}}(S\sigma_2(S))-2v_{\mathfrak{p}_{3i}}(\sigma_1(S)\sigma_2(S))$
    are divided by 4 for $i\in\{1,2\}$.
\end{lemma}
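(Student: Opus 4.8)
The plan is to recognize the two displayed quantities as $2v_{\mathcal P_i}(S)$, and then to compute the Hilbert symbol $\left(\frac{-1,S}{\mathcal P_i}\right)$ directly from the isomorphism type of the completion $F_{\mathcal P_i}$. First I would record, exactly as in the display preceding the statement, that the formal identity $S^2=\dfrac{(S\sigma_1(S))(S\sigma_2(S))}{\sigma_1(S)\sigma_2(S)}$, together with $S\sigma_1(S)\in F_1$, $S\sigma_2(S)\in F_2$, and $\sigma_1(S)\sigma_2(S)\in F_3$ (this last because $\sigma_3=\sigma_1\sigma_2$ fixes it: $\sigma_3(\sigma_1(S)\sigma_2(S))=\sigma_2(S)\sigma_1(S)$), combined with the stated ramification data $\mathfrak p_1O_F=\mathcal P_1\mathcal P_2$, $\mathfrak p_2O_F=\mathcal P_1\mathcal P_2$ and $\mathfrak p_{3i}O_F=\mathcal P_i^{2}$, yields
$$2v_{\mathcal P_i}(S)=v_{\mathfrak p_1}(S\sigma_1(S))+v_{\mathfrak p_2}(S\sigma_2(S))-2v_{\mathfrak p_{3i}}(\sigma_1(S)\sigma_2(S)).$$
Hence the $i$-th displayed quantity equals $2v_{\mathcal P_i}(S)$, so it is divisible by $4$ if and only if $v_{\mathcal P_i}(S)$ is even.

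Next I would pin down $F_{\mathcal P_i}$. The given factorizations show $p$ ramifies in $F_1$ while $\mathfrak p_1$ splits in $F$, so $F_{\mathcal P_i}/\mathbb Q_p$ has ramification index $2$ and, by the degree count $[F:\mathbb Q]=4=2\cdot e\cdot f$, residue degree $1$. Writing $a=pa'$, $b=pb'$ with $p\nmid a'b'$, the hypothesis $\left(\frac{a/p}{p}\right)=\left(\frac{b/p}{p}\right)$ means $a'/b'$ is a square in $\mathbb Q_p$, hence $\sqrt b\in\mathbb Q_p(\sqrt a)$ and $F_{\mathcal P_i}=\mathbb Q_p(\sqrt a)=\mathbb Q_p(\sqrt{pa'})$, a ramified quadratic extension of $\mathbb Q_p$ (concretely $\mathbb Q_p(\sqrt p)$ or $\mathbb Q_p(\sqrt{-p})$ according to whether $\left(\frac{a/p}{p}\right)$ is $1$ or $-1$, using $p\equiv3\pmod4$).

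Finally, since $p\equiv3\pmod4$, $-1$ is a non-square in the residue field $\mathbb F_p=O_{F_{\mathcal P_i}}/\mathcal P_i$; therefore $-1$ is a non-square unit in $F_{\mathcal P_i}$ and $F_{\mathcal P_i}(\sqrt{-1})/F_{\mathcal P_i}$ is the unramified quadratic extension, whose norm group is $\{\,x\in F_{\mathcal P_i}^{\times}:v_{\mathcal P_i}(x)\text{ even}\,\}$. Consequently $\left(\frac{-1,S}{\mathcal P_i}\right)=1$ if and only if $S$ lies in that norm group, i.e. if and only if $v_{\mathcal P_i}(S)$ is even, which by the first step is exactly divisibility of the $i$-th displayed quantity by $4$; running this over $i\in\{1,2\}$ gives the lemma. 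The routine ingredients are the valuation bookkeeping of the first step (already carried out in the excerpt) and the standard norm description for an unramified quadratic extension of local fields. The point that needs care is the second step: one must check that $F_{\mathcal P_i}$ is genuinely a \emph{ramified} quadratic extension of $\mathbb Q_p$ in \emph{both} sub-cases $\left(\frac{a/p}{p}\right)=1$ and $\left(\frac{a/p}{p}\right)=-1$, so that $F_{\mathcal P_i}$ never contains $\sqrt{-1}$ and the Hilbert symbol is controlled purely by the parity of $v_{\mathcal P_i}(S)$.
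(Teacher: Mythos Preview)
Your proposal is correct and follows essentially the same route as the paper: the valuation identity $2v_{\mathcal P_i}(S)=v_{\mathfrak p_1}(S\sigma_1(S))+v_{\mathfrak p_2}(S\sigma_2(S))-2v_{\mathfrak p_{3i}}(\sigma_1(S)\sigma_2(S))$ is exactly the computation the paper carries out just before stating the lemma, and the reduction of $\left(\frac{-1,S}{\mathcal P_i}\right)=1$ to ``$v_{\mathcal P_i}(S)$ even'' is the standard step the paper leaves implicit. Your explicit identification of $F_{\mathcal P_i}$ as a ramified quadratic extension of $\mathbb Q_p$ (hence with residue field $\mathbb F_p$, so $F_{\mathcal P_i}(\sqrt{-1})/F_{\mathcal P_i}$ is unramified and its norm group is the even-valuation subgroup) just spells out what the paper takes for granted.
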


\subsection{Dyadic cases}

Finally we need to compute $(\frac{-1,S}{\mathcal{P}})$ for dyadic places $\mathcal{P}$. 
(Of course we assume $(\frac{-1,S}{\mathcal{P}})=1$ all infinite and non-dyadic places $\mathcal{P}$.)
If none of $a,b,c$ is $\equiv1\pmod8$ (For example, $(a,b,c)=(2,3,6)$ or $(2,5,10)$), then $F$ has only one dyadic place,
and by Hilbert Reciprocity Law we have $(\frac{-1,S}{\mathcal{P}})=1$.

Next we assume one of $\{a,b,c\}$, say $a$, is $\equiv1\pmod8$. Then $b\equiv c\pmod8$.

(1)If $b\not\equiv1\pmod4$, i.e, $2O_F$ splits and ramifies, then $F_\mathcal{P}$s are isomorphic to 
a quadratic ramified extension of $\mathbb{Q}_2$.
By lemma 0.2, $(\frac{-1,S}{\mathcal{P}})=1$ for both dyadic places $\mathcal{P}$, if and only if $S=2^f(2S'+1)$, with $f=v_2(S)$ and $S'\in O_F$. 
(Recall that algebraic integers in biquadratic fields are given in Theorem 2 of [1], e.g, $\frac{1}{4}(1-\sqrt{21}+\sqrt{33}+\sqrt{77})$
is an algebraic integer in $\mathbb{Q}(\sqrt{21},\sqrt{33})$.)

(2)If $b\equiv5\pmod8$, i.e, $2O_F$ splits and inerts, then both $F_\mathcal{P}$s are isomorphic to 
$\mathbb{Q}_2(\sqrt{5})$. Let $S''=\frac{S}{2^{v_2(S)}}$($0\leq v_2(S)\leq 2$) and
$$
    e(N)=\left\{
    \begin{array}{lll}
        E &   & if\ N\equiv E^2\pmod{256}\ odd\ and\ square-free, 1\leq E(W)\leq63\\ 
        \sqrt{5} &   & if\ N\equiv5\pmod{32}\\ 
        \sqrt{5}(1+2^2+2^3) &   & if\ N\equiv13\pmod{32}\\ 
        \sqrt{5}(1+2^3) &   & if\ N\equiv21\pmod{32}\\ 
        \sqrt{5}(1+2^2)  &   & if\ N\equiv29\pmod{32}\\ 
        s_1s_2\dots s_ge(W) &  & if\ w=s_1^2s_2^2\dots s_g^2W,W\equiv1\pmod4\ square-free
    \end{array}
\right.
    $$
(Recall Lemma 2.7 of [4].) Replace $\sqrt{b}$ with $e(b)$, $\sqrt{c}$ with $e(c)$, and $\sqrt{a}$ with $A$ that
$A\mathrm{gcd}(b,c)\equiv e(b)e(c)\pmod{16}$, then in $F_{\mathcal{P}}\simeq\mathbb{Q}_2(\sqrt{5})$ we have
$$S''\equiv \frac{1}{2^{v_2(S)}}(s_0+s_1A+s_2e(b)+s_3e(c))\pmod4$$
Finally, $\frac{1}{2^{v_2(S)}}(s_0+s_1A+s_2e(b)+s_3e(c))$ and then $S''$ can be decided a sum of two squares or not 
in $\mathbb{Q}_2(\sqrt{5})$ by Lemma 0.2.

(3)If $b\equiv1\pmod8$, i.e, $2O_F$ completely splits, then $0\leq v_2(S)\leq 2$. Take $S''=\frac{S}{v_2(S)}$, where
replacing $\sqrt{b}$ with $e(b)$, $\sqrt{c}$ with $e(c)$, and $\sqrt{a}$ with $A$ that $A\mathrm{gcd}(b,c)\equiv e(b)e(c)\pmod{16}$,
then in $F_{\mathcal{P}}\simeq\mathbb{Q}_2$ we have
$$S''\equiv \frac{1}{2^{v_2(S)}}(s_0+s_1A+s_2e(b)+s_3e(c))\pmod4.$$
Finally, $\frac{1}{2^{v_2(S)}}(s_0+s_1A+s_2e(b)+s_3e(c))$ and then $S''$ can be decided a sum of two squares or not 
in $\mathbb{Q}_2$ by this well-known proposition:
A unit in $O_{\mathbb{Q}_2}$ is a sum of two squares if and only if its odd part is $\equiv1\pmod4$.

Concluding all process above, and 66:1 of [1], we conclude that 

\begin{theorem}
    Let $S=s_0+s_1\sqrt{a}+s_2\sqrt{b}+s_3\sqrt{c}$ be a number in a biquadratic field 
    $F=\mathbb{Q}(\sqrt{a},\sqrt{b})$, where $a,b$ are square-free integers and $ab$ is not a square,
    $c=\frac{ab}{\mathrm{gcd}(a,b)^2}$, and $(s_0,s_1,s_2,s_3)=1$, then $S$ is a sum of two squares
    in $F$ if and only if

    (1)$S$ is totally positive if $F$ is a real field;

    (2)For every prime number $p|N_{F/\mathbb{Q}}(S)$ with $p\equiv3\pmod4$ and $(\frac{a}{p})=(\frac{b}{p})=1$,
    conditions in Lemma 2.1 are satisfied.

    (3)For every prime number $p|N_{F/\mathbb{Q}}(S)$ with $p\equiv3\pmod4$ and $p|abc$, 
    say $p|a$ and $p|b$, and $(\frac{c}{p})=1$,
    conditions in Lemma 2.2 are satisfied.

    (4)If at least one of $a,b,c$ is $\equiv1\pmod8$, say, $a\equiv1(mod8)$,

    (4-1)For $b\not\equiv1\pmod4$, there is an $S'\in O_F$ satisfying $S=2^f(2S'+1)$, $f\in\mathbb{Z}$;

    (4-2)For $b\equiv5\pmod8$, $s_0+s_1A+s_2e(b)+s_3e(c)=2^fS''$, where
    $S''\equiv1,3,\frac{\pm3\pm\sqrt{5}}{2}\pmod 4$, $f=v_2(S)$;

    (4-3)For $b\equiv1\pmod8$, $s_0+s_1A+s_2e(b)+s_3e(c)=2^fS''$, where
    $S''\equiv1\pmod 4$, $f=v_2(S)$.
\end{theorem}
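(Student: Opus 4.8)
The plan is to convert the global statement into a finite list of local Hilbert-symbol conditions, each of which has already been analysed in the preceding sections. First, since $1,\sqrt a,\sqrt b,\sqrt c$ are $\mathbb Q$-linearly independent, the hypothesis $(s_0,\dots,s_3)=1$ forces $S\neq0$, so ``$S$ is a sum of two squares in $F$'' is equivalent to isotropy of the ternary form $\langle1,1,-S\rangle$ over $F$; by 66:1 and 63:12 of [2] this holds iff the form is isotropic over every completion $F_{\mathcal P}$, i.e. iff $(\frac{-1,S}{\mathcal P})=1$ for every place $\mathcal P$ of $F$ (using the identity $(S,S)_{\mathcal P}=(S,-1)_{\mathcal P}$). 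So the whole theorem reduces to deciding precisely when all these symbols are $1$. I would next dispose of the ``cheap'' places: at a finite non-dyadic place $\mathcal P$ over $p$ one has $(\frac{-1,S}{\mathcal P})=(-1)^{v_{\mathcal P}(S)}$ if $-1$ is a non-square in the residue field and $(\frac{-1,S}{\mathcal P})=1$ if it is a square there, so the symbol is automatically $1$ when $p\equiv1\pmod4$, and when $v_{\mathcal P}(S)=0$, hence whenever $p\nmid N_{F/\mathbb Q}(S)$. This leaves only the archimedean places, the dyadic places, and the places over primes $p\equiv3\pmod4$ dividing $N_{F/\mathbb Q}(S)$.

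For the archimedean places, $(\frac{-1,S}{\mathcal P})=-1$ exactly when $\mathcal P$ is real and $S$ is negative there, which gives condition (1). For $p\equiv3\pmod4$ with $p\mid N_{F/\mathbb Q}(S)$ I would split on whether $p\mid abc$. If $p\nmid abc$ and $(\frac ap)=(\frac bp)=(\frac cp)=1$, then $p$ splits completely, each $F_{\mathcal P}\cong\mathbb Q_p$, $-1$ is a non-square there, so all four symbols are $1$ iff each $v_{\mathcal P_i}(S)$ is even; rewriting $2v_{\mathcal P_i}(S)$ via the displayed identities in terms of the $v_{\mathfrak p_{jk}}$ in the three quadratic subfields (computable by Remark 2.1) turns this into the divisibility-by-$4$ conditions of Lemma 2.1, which is (2). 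If instead $(\frac ap),(\frac bp),(\frac cp)$ is a permutation of $-1,-1,1$, then every $F_{\mathcal P}$ contains $\mathbb Q_p(\sqrt{-1})$ and the symbol is automatically $1$. If $p\mid abc$, say $p\mid a$ and $p\mid b$: when $(\frac cp)=-1$ one again has $F_{\mathcal P}\supseteq\mathbb Q_p(\sqrt{-1})$ and the symbol is $1$; when $(\frac cp)=1$ each $F_{\mathcal P}$ is a ramified quadratic extension of $\mathbb Q_p$ in which $-1$ stays a non-square, so triviality once more means $v_{\mathcal P_i}(S)$ even, which through the corresponding displayed identities (using Remark 2.2) becomes the divisibility conditions of Lemma 2.2, which is (3).

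The dyadic places are where the real work is. If none of $a,b,c$ is $\equiv1\pmod8$, then $F$ has a single dyadic place; the product of $(\frac{-1,S}{\mathcal P})$ over all places of $F$ equals $1$ (Hilbert reciprocity over $F$), and, granting conditions (1)--(3), every other factor is already $1$, so this last factor is $1$ as well and no further condition is needed. Otherwise some entry, say $a$, is $\equiv1\pmod8$; then $2$ has exactly two dyadic places, $b\equiv c\pmod8$, and according to $b\bmod8$ the completion $F_{\mathcal P}$ is a ramified quadratic extension of $\mathbb Q_2$ (if $b\not\equiv1\pmod4$), the unramified field $\mathbb Q_2(\sqrt5)$ (if $b\equiv5\pmod8$), or $\mathbb Q_2$ itself (if $b\equiv1\pmod8$). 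In each case I would rewrite ``$(\frac{-1,S}{\mathcal P})=1$'', i.e. ``$S$ is a sum of two squares in $F_{\mathcal P}$'', as an effective congruence: pull out the exact power of $2$ dividing $S$, identify the unit part modulo $4$ with $\frac1{2^{v_2(S)}}\bigl(s_0+s_1A+s_2e(b)+s_3e(c)\bigr)$ using the explicit integral basis of [1] together with the normalisation $e(N)$ of square roots modulo powers of $2$ (Lemma 2.7 of [4], with $A\gcd(b,c)\equiv e(b)e(c)\pmod{16}$), and then apply Lemma 0.2 in its $k\neq5$ and $k=5$ forms, or the classical fact that a $2$-adic unit is a sum of two squares iff its odd part is $\equiv1\pmod4$. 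This produces (4-1)--(4-3), and combining all cases with 66:1 of [2] completes the proof.

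The step I expect to be the main obstacle is the dyadic analysis: correctly counting the dyadic places and identifying each $F_{\mathcal P}$ from the $2$-adic behaviour of a biquadratic field, and --- more substantially --- verifying that the substitution $\sqrt b\mapsto e(b)$, $\sqrt c\mapsto e(c)$, $\sqrt a\mapsto A$ genuinely computes $S$ modulo $4$ inside $F_{\mathcal P}$, which hinges on the compatibility between the integral basis of [1] and the $2$-adic square descriptions of Lemma 0.1. One must also be careful that the single-dyadic-place reciprocity shortcut is invoked only after every other factor in the product formula has been pinned down by the earlier steps. Everything else --- the residue-field computations of Hilbert symbols and the valuation identities --- is routine bookkeeping along the ramification types.
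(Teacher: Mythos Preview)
Your proposal is correct and follows essentially the same route as the paper: reduce via 66:1 of [2] to Hilbert symbols at the archimedean, dyadic, and bad odd places, dispose of the automatic cases (primes $p\equiv1\pmod4$, primes not dividing $N_{F/\mathbb Q}(S)$, the $(-1,-1,1)$ Legendre-symbol pattern, and $(\frac cp)=-1$), and then invoke Lemmas~2.1, 2.2, and~0.2 together with Hilbert reciprocity exactly as the paper does in Sections~2.1--2.3. One small slip worth fixing: when $a\equiv1\pmod8$ and also $b\equiv1\pmod8$ the prime $2$ splits \emph{completely} into four dyadic places (not two), though you do correctly identify each $F_{\mathcal P}\cong\mathbb Q_2$ in that sub-case, so the subsequent analysis goes through unchanged.
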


Finally, for most general cases that $(s_0,s_1,s_2,s_3)=1$, we have that

\begin{theorem}
    Take assumptions in Theorem 2.3. Let $\mathcal{S}=Q^2UVS$, $0\neq Q\in\mathbb{Q}$, $U$(resp. $V$)
    is the product of distinct prime numbers that $\equiv1,2$(resp. $3$)$\pmod4$,
    then $\mathcal{S}$ is a sum of two squares in $F$ if and only if

    (1)$\mathcal{S}$ is totally positive if $F$ is a real field;

    (2)For every prime number $p|N_{F/\mathbb{Q}}(S)$ with $p\equiv3\pmod4$ and $(\frac{a}{p})=(\frac{b}{p})=1$,
    either the four values in Lemma 2.1 are all $\equiv0\pmod4$ with $p\nmid V$,
    or $\equiv2\pmod4$ with $p\mid V$.

    (3)For every prime number $p|N_{F/\mathbb{Q}}(S)$ with $p\equiv3\pmod4$ and $p|abc$, 
    say $p|a$ and $p|b$, and $(\frac{c}{p})=1$,
    conditions in Lemma 2.2 are satisfied.

    (4)If at least one of $a,b,c$ is $\equiv1\pmod8$, say, $a\equiv1(mod8)$,

    (4-1)For $b\not\equiv1\pmod4$, there is an $S'\in O_F$ satisfying $VS=2^f(2S'+1)$, $f\in\mathbb{Z}$;

    (4-2)For $b\equiv5\pmod8$, $s_0+s_1A+s_2e(b)+s_3e(c)=2^fS''$, where
    $VS''\equiv1,3,\frac{\pm3\pm\sqrt{5}}{2}\pmod 4$, $f=v_2(S)$, ;

    (4-3)For $b\equiv1\pmod8$, $s_0+s_1A+s_2e(b)+s_3e(c)=2^fS''$, where
    $VS''\equiv1\pmod 4$, $f=v_2(S)$.

    (5)For every prime number $p\equiv3\pmod4$ with $p\nmid abcN_{F/\mathbb{Q}}(S)$, $p\nmid V$.
\end{theorem}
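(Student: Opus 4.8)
The plan is to reduce Theorem 2.4 to Theorem 2.3 using two structural facts: that the nonzero sums of two squares in a field form a subgroup of the multiplicative group, and that the Hilbert symbol $\left(\frac{-1,\cdot}{\mathcal{P}}\right)$ is bilinear. First I would record that $Q^2$ is a sum of two squares and that $U$, being a product of rational primes each $\equiv 1$ or $2\pmod 4$, is a sum of two squares in $\mathbb{Q}$ hence in $F$, so $\left(\frac{-1,Q^2U}{\mathcal{P}}\right)=1$ at every place $\mathcal{P}$ of $F$. Thus $\left(\frac{-1,\mathcal{S}}{\mathcal{P}}\right)=\left(\frac{-1,VS}{\mathcal{P}}\right)$ for all $\mathcal{P}$, and by 66:1 and 63:12 of [2] it suffices to decide for which $S$ and $V$ one has $\left(\frac{-1,VS}{\mathcal{P}}\right)=1$ at all infinite places, all dyadic places, and all $p$-adic places with $p\equiv 3\pmod 4$ dividing $N_{F/\mathbb{Q}}(VS)=V^{4}N_{F/\mathbb{Q}}(S)$; the new prime divisors contributed by $V$ are exactly the source of condition (5). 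Bilinearity gives $\left(\frac{-1,VS}{\mathcal{P}}\right)=\bigl(\prod_{q\mid V}\left(\frac{-1,q}{\mathcal{P}}\right)\bigr)\left(\frac{-1,S}{\mathcal{P}}\right)$, so the whole task is to evaluate the twisting factor $\prod_{q\mid V}\left(\frac{-1,q}{\mathcal{P}}\right)$ place by place and feed the result into Theorem 2.3.

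For the twist I would argue as follows. For $q\equiv 3\pmod 4$, $\left(\frac{-1,q}{\mathcal{P}}\right)=1$ at all infinite places and at all finite places not above $q$ or $2$. Over a dyadic $\mathcal{P}$ it is $1$ exactly when $2O_F$ does not split completely (this is the computation carried out in the proof of Lemma 1.1), so the total twist there is $1$ unless $2O_F$ splits completely, in which case $F_{\mathcal{P}}\cong\mathbb{Q}_2$ and the twist is $\left(\frac{-1,V}{\mathbb{Q}_2}\right)$, i.e.\ $1$ if $V\equiv 1\pmod 4$ and $-1$ if $V\equiv 3\pmod 4$. Over a $\mathcal{P}$ above $p\mid V$ with $p\nmid abc$: the other factors vanish (the remaining $q\mid V$ are $\mathcal{P}$-adic units and $F_{\mathcal{P}}(\sqrt{-1})/F_{\mathcal{P}}$ is unramified), and $\left(\frac{-1,p}{\mathcal{P}}\right)=-1$ exactly when $F_{\mathcal{P}}\cong\mathbb{Q}_p$, i.e.\ $\left(\tfrac ap\right)=\left(\tfrac bp\right)=1$, and $=1$ otherwise (then $\sqrt{-1}\in F_{\mathcal{P}}$). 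Over a $\mathcal{P}$ above $p\mid V$ with $p\mid abc$, $p$ has even $\mathcal{P}$-valuation in the totally ramified $F_{\mathcal{P}}$ and again $F_{\mathcal{P}}(\sqrt{-1})/F_{\mathcal{P}}$ is unramified, so the twist is $1$.

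Finally I would splice these into the place-by-place dichotomy of Theorem 2.3. At infinite places the twist is trivial, giving (1). At an unramified $p\equiv 3\pmod 4$ with $\left(\tfrac ap\right)=\left(\tfrac bp\right)=1$ and $p\mid N_{F/\mathbb{Q}}(S)$, the requirement ``$\left(\frac{-1,S}{\mathcal{P}}\right)=1$ at the four $p$-adic places'' becomes ``$\left(\frac{-1,S}{\mathcal{P}}\right)=\left(\frac{-1,p}{\mathbb{Q}_p}\right)$''; since the four quantities of Lemma 2.1 are the $2v_{\mathcal{P}_i}(S)$, this is the dichotomy in (2) (all $\equiv 0\pmod 4$ when $p\nmid V$, all $\equiv 2\pmod 4$ when $p\mid V$). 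At a ramified $p$ the twist is trivial, so (3) is unchanged. At dyadic places: when $2O_F$ does not split completely the twist is trivial and one recovers (4-1) and (4-2) (stated for $VS$, equivalent to the versions for $S$ because $V\equiv 1\pmod{2O_F}$ and, for (4-2), $V\bmod 4\in\{1,3\}$ lies in the group of units of $\mathbb{Q}_2(\sqrt5)$ that are sums of two squares); when $2O_F$ splits completely the twist is $\left(\frac{-1,V}{\mathbb{Q}_2}\right)$, which turns ``odd part of $S\equiv 1\pmod 4$'' into ``odd part of $VS\equiv 1\pmod 4$'', i.e.\ (4-3). Assembling the local data via 66:1 of [2] yields the equivalence.

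The main obstacle I anticipate is pinning down condition (5) precisely, i.e.\ identifying exactly which prime divisors of $V$ that do not divide $abc\,N_{F/\mathbb{Q}}(S)$ genuinely obstruct: for such a $p$ the $p$-adic symbol is $-1$ only when $p$ splits completely in $F$, while if $p$ does not split completely the obstruction can migrate to the dyadic places and interact with (4-3), so the bookkeeping of which $(p,\text{splitting type})$ pairs actually force $p\nmid V$ must be done with care. The companion difficulty is the dyadic analysis itself: one must check that in each shape of a dyadic completion ($\mathbb{Q}_2$, the unramified $\mathbb{Q}_2(\sqrt5)$, or a ramified quadratic extension) the $V$-twist is either absorbed into the local group of sums of two squares or is exactly $\left(\frac{-1,V}{\mathbb{Q}_2}\right)$, and that the normalization $S''$ and the exponent $f=v_2(S)$ transport correctly under the isomorphisms used in Theorem 2.3.
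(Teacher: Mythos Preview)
Your approach is exactly the paper's: the paper's entire proof is the single line ``Directly verify that $\left(\frac{-1,VS}{*}\right)$ are all $1$, comparing with Theorem 2.3,'' and you carry out precisely this verification by stripping off $Q^2U$ and then computing the twist $\left(\frac{-1,V}{\mathcal P}\right)$ place by place via bilinearity. Your detailed bookkeeping (including the caution about which primes in $V$ genuinely obstruct in condition~(5)) goes well beyond what the paper records, but the underlying strategy is identical.
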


\begin{proof}
    Directly verify that $(\frac{-1,VS}{*})$ are all 1, comparing with Theorem 2.3. 
\end{proof}

\begin{remark}
    The theorem above, together with Lemma 2.1, 2,2 and their following remarks, 
    making the process of determining a number in a biquadratic field
    can be finished on computer.
\end{remark}

\begin{example}
    Let $S=7+2\sqrt{2}+2\sqrt{3}+\sqrt{6}$ in $K=\mathbb{Q}(\sqrt{2},\sqrt{3})$. 
    $2O_K$ is totally ramified, hence $K$ has only one dyadic spot.
    One can easily verify that $S$ is totally positive, and $N_{K/\mathbb{Q}}(S)=1009$, a prime $\equiv1\pmod 4$.
    Hence $S$ is a sum of two squares at all spots, and therefore globally. Actually,
    $$S=(1-\frac{\sqrt{2}}{2})^2+(1+\frac{\sqrt{6}}{2}+\sqrt{3})^2.$$
\end{example}

\begin{example}
    Let $S=2+\sqrt{-3}+\sqrt{5}-3\sqrt{-15}$ in $K=\mathbb{Q}(\sqrt{-3},\sqrt{5})$.
    $O_K$ has two dyadic spot isomorphic to $\mathbb{Q}_2(\sqrt{5})$.
    $N_{K/\mathbb{Q}}(S)=20629=7^2\times421$, where $421\equiv1\pmod4$ and $7\equiv3\pmod4$.
    Since $(\frac{3}{7})=1$, $(\frac{5}{7})=-1$, 7-adic primes are isomorphic to $\mathbb{Q}_7(\sqrt{-1})$, so no verification is needed
    in 7-adic cases. As for dyadic cases, 
    $$S''=2\sqrt{5}-1=4(\frac{\sqrt{5}-1}{2})+1\equiv1\pmod 4.$$
    Hence $S$ is the sum of two squares at both dyadic places, therefore globally. Actually,
    $$S=(\frac{5}{2}-\sqrt{5}+\frac{\sqrt{-15}}{2}+2\sqrt{-3})^2+(3\sqrt{-3}-\frac{3}{2}+\frac{\sqrt{-15}}{2}-2\sqrt{5})^2.$$
\end{example}

\begin{example}
    Let $S=-92+\sqrt{-7}+21\sqrt{17}-\sqrt{-119}$ in $K=\mathbb{Q}(\sqrt{-7},\sqrt{17})$.
    $K$ has 4 dyadic spots, all of which isomorphic to $\mathbb{Q}_2$.
    $N_{K/\mathbb{Q}}(S)=3130541$, a prime $\equiv1\pmod4$.
    So we only need to focus on dyadic cases.
    We have $e(-7)=53$ and $e(17)=23$, so $S''=-92+53+21\times23-23\times53=-775\equiv1\pmod8$.
    Hence $S$ is a sum of two squares in $K$. Actually,
    $$S=(\frac{3}{2}+\sqrt{-7}-\frac{\sqrt{-119}}{2})^2+(\frac{1}{2}-2\sqrt{-7}+\frac{\sqrt{-119}}{2})^2.$$
\end{example}

\section*{Acknowledgements}
The author is sincerely grateful for directions by Prof Hourong Qin, and support by National Natural Science Foundation of China (11971224).

\section*{References}

[1]K. S. Williams, Integers of Biquadratic Fields, Canad. Math. Bull. Vol. 13 (4), 1970, 519-526.

[2]O. O' Meara, Introduction to Quadratic Forms, Springer-Verlag, 1973.

[3]H. Qin, The Sum Of Two Squares In A Quadratic Field, Communications in Algebra, 25:1 (1997), 177-184.

[4]W. Huang, Sum of two squares in cyclic quartic fields, preprint, 2024.
\end{document}